\newcommand{\balg}{\begin{algorithm}}
\newcommand{\ealg}{\end{algorithm}}
\newcommand{\br}{\begin{remark}}
\newcommand{\er}{\end{remark}}
\newcommand{\bex}{\begin{example}}
\newcommand{\eex}{\end{example}}
\newtheorem{theorem}{Theorem}[section]
\newtheorem{proposition}[theorem]{Proposition}
\theoremstyle{definition}
\newtheorem{example}[theorem]{Example}
\newtheorem{algorithm}[theorem]{Algorithm}
\newtheorem{remark}[theorem]{Remark}
\numberwithin{equation}{section}
\begin{document}

\title{The CP-matrix completion problem}

%    Remove any unused author tags.

%    author one information
\author{Anwa Zhou}
\address{Department of Mathematics, Shanghai Jiao Tong University,
Shanghai 200240, P.R. China}
%\curraddr{Department of Mathematics, Shanghai Jiao Tong University,
%Shanghai 200240, P.R. China}
\email{congcongyan@sjtu.edu.cn}
\thanks{}

%    author two information
\author{Jinyan Fan}
\address{Department of Mathematics, and MOE-LSC, Shanghai Jiao Tong University,
Shanghai 200240, P.R. China}
%\curraddr{Department of Mathematics, and MOE-LSC, Shanghai Jiao Tong University,
%Shanghai 200240, P.R. China}
\email{jyfan@sjtu.edu.cn}
\thanks{The second author is partially supported by NSFC 11171217.}

\subjclass[2000]{Primary 15A23, 15A48, 15A83, 90C22}
%    For articles to be published after 1 January 2010, you may use
%    the following version:
%\subjclass[2010]{Primary }

\date{}

\dedicatory{}

\begin{abstract}
A symmetric matrix $C$ is completely positive (CP)
if there exists an entrywise nonnegative matrix $B$ such that $C=BB^T$.
The CP-completion problem is to study whether we can assign values
to the missing entries of a partial matrix
(i.e., a matrix having unknown entries) such that
the completed matrix is completely positive.
We propose a semidefinite algorithm for solving general CP-completion problems,
and study its properties.
When all the diagonal entries are given, the algorithm can
give a certificate if a partial matrix is not CP-completable,
and it almost always gives a CP-completion if it is CP-completable.
When diagonal entries are partially given,
similar properties hold. Computational experiments are also presented
to show how CP-completion problems can be solved.
\end{abstract}

\keywords{completely positive matrices, matrix completion,
$E$-matrices, $\mathcal {E}$-truncated $\Delta$-moment problem,
semidefinite program}

\maketitle

\section{Introduction}

A matrix is partial if some of its entries are missing.
The matrix completion problem is to study whether
we can assign values to the missing entries of a partial matrix
such that the completed matrix satisfies certain properties,
e.g., it is positive semidefinite or an Euclidean distance matrix.
This problem has wide applications, as shown in \cite{HuangLP,CandesR,Laurent01,Al-HomidanW,NegahbanW}.
We refer to Laurent's survey \cite{Laurent} and the references therein.
Interesting applications include the Netflix problem \cite{Netflix,JamesS},
global positioning \cite{CandesP}, multi-task
learning \cite{AbernethyEV06,AbernethyEV09,ArgyriouEP}, etc.
The motivation of this paper is to study whether or not a partial matrix
can be completed to a matrix that is completely positive.

A real $n\times n$ symmetric matrix $C$ is completely
positive (CP) if there exist nonnegative vectors $u_1,\cdots,u_m$
in $\mathbb{R}^n$ such that
\begin{equation}
C = u_1 u_1^T +\cdots +u_m u_m^T,\label{CPd}
\end{equation}
where $m$ is called the  length of the decomposition
(\ref{CPd}). The smallest $m$ in the above is called the
 CP-rank  of $C$. If $C$ is complete positive, we call (\ref{CPd}) a
 CP-decomposition of $C$. Clearly, a matrix $C$ is completely positive if and only if
$C = B B^T$ for an entrywise nonnegative matrix $B$.
Hence, a CP-matrix is not only positive semidefinite but also nonnegative entrywise.

CP-matrices have wide applications in mixed binary quadratic programming \cite{Burer}, approximating
stability numbers \cite{deKlerk}, max clique problems \cite{PardalosR,ZilinskasD},
single quadratic constraint problems \cite{Preisig},
standard quadratic optimization problems~\cite{Bomze},
and general quadratic programming \cite{Quist}.
Some NP-hard problems can be formulated as linear optimization
problems over the cone of CP-matrices (cf.~\cite{Dur,Hiriart-Urruty,Murty}).
We refer to \cite{BermanN,Burer1,BermanS,BomzeSU,Bundfuss,Dickinson,DickinsonD,DickinsonL,Bomze1}
for the work in this field.
These important applications motivate us to
study the so-called CP-completion problem. Let
\[
E=\{(i_k,j_k) \mid 1\leq i_k \leq j_k \leq n, k=1,\cdots,l\}.
\]
be an index set of pairs.
A partial symmetric matrix $A$ is called an $E$-matrix
if its entries $A_{ij}$ are given for all $(i,j)\in E$,
while $A_{ij}$ are missing for $(i,j)\not\in E$.
The CP-completion problem is to study whether we can assign values
to the missing entries of an $E$-matrix such that
the completed matrix is completely positive.
If such an assignment exists, we say that the $E$-matrix is CP-completable;
otherwise, we say that it is not CP-completable
(or {non-CP-completable}).

A symmetric matrix can be identified by a vector that
consists of its upper triangular entries.
Similarly, an $E$-matrix $A$ can be identified as a vector
\[\mathbf{a} \in \mathbb{R}^E,\]
such that $\mathbf{a}_{ij} = A_{ij}$ for all $(i,j) \in E$.
(The symbol $\mathbb{R}^E$ stands for the space of all real vectors
indexed by pairs in $E$.)
For a matrix $F$, we denote by $F|_E$
the vector in $\mathbb{R}^E$ whose $(i,j)$-entry is $F_{ij}$, for all $(i,j) \in E$.
Clearly, an $E$-matrix $A$ is {CP-completable} if and only if there exists
a CP-matrix $C$ such that $\mathbf{a} = C|_E$.

For example, consider the $E$-matrix $A$ given as  (\cite[Example 2.23]{BermanN}):
\begin{equation}\label{ex223}
\left[\begin{array}{cccc}
  2& 3 &0 &\ast\\
  3& 6 &3 &0\\
  0& 3 &6 &3\\
  \ast &0 &3 &2
\end{array}\right],
\end{equation}
where $\ast$ means that the entry there is missing, throughout the paper.
The index set $E$ is
\[
\{(1,1),(1,2),(1,3),(2,2),(2,3),(2,4),(3,3),(3,4),(4,4)\},
\]
and the identifying vector $\mathbf{a}$ of $A$ is
\[
(2,3,0,6,3,0,6,3,2).
\]
If we assign the missing entry $A_{14}$ a value, say, $t$,
the determinant of $A$ is $-27(t +1)$.
So, $A$ can not be positive semidefinite for any $t>-1$.
%if $t \leq -1$, then $A_{14} <0$.
This implies that the $E$-matrix $A$ is not CP-completable.

For another example, consider the $E$-matrix $A$ given as:
\begin{equation}\label{ex13}
\left[\begin{array}{ccc}
  1& 1 &1\\
  1& 1 &1\\
  1& 1 &\ast
\end{array}\right].
\end{equation}
The index set $E$ is $\{(1,1),(1,2),(1,3),(2,2),(2,3)\}$ and
the identifying vector
$\mathbf{a}$ is $(1,1,1,1,1)$. Since
\[
\left[\begin{array}{ccc}
  1& 1 &1\\
  1& 1 &1\\
  1& 1 &1
\end{array}\right]=\left(\begin{array}{c}
  1\\
  1\\
  1
\end{array}\right)\left(\begin{array}{c}
  1\\
  1\\
  1
\end{array}\right)^T
\]
is completely positive, we know that
this $E$-matrix is CP-completable.

Note that if a diagonal entry of a CP-matrix is zero,
then all the entries in its row or column
are zeros. Without loss of generality,
we can assume that all the given entries of an $E$-matrix
are nonnegative and all its given diagonal entries are strictly positive.
Otherwise, it can be reduced to a smaller $E^\prime$-matrix
with some index set $E^\prime$.

An $E$-matrix $A$ is called a partial CP-matrix if
every principal submatrix of $A$,
whose entry indices are all from $E$, is completely positive.
When all the diagonal entries are given,
the specification graph of an $E$-matrix is defined as the graph
whose vertex set is $\{1,2,\ldots,n\}$ and
whose edge set is $\{ (i,j) \in E: \, i\neq j \}$.
A symmetric partial matrix is called a matrix realization of a graph $G$
if its specification graph is $G$.
It is shown in \cite{DrewJ, BermanN} that
a partial CP-matrix realization of a connected graph
$G$ is CP-completable if and only if $G$ is a block-clique graph.
Under some conditions, a partial CP-matrix,
whose specification graph $G$ contains cycles, is CP-completable
if and only if all the blocks of $G$ are cliques or cycles \cite{DrewJKM}.
These results assume that all the diagonal entries are given and the
specification graphs satisfy certain combinatorial properties.

In this paper, we study general CP-completion problems in a unified framework.
If an $E$-matrix is not CP-completable, how
can we get a certificate for this? If it is CP-completable, how can we
get a CP-completion and a CP-decomposition for the completed matrix?
To the best knowledge  of the authors,
there exists few work on solving general CP-completion problems.

The paper is organized as follows. In Section 2,
we give a semidefinite algorithm for solving general CP-completion problems,
after an introduction of truncated moment problems.
Its basic properties are also studied.
In Section 3, we study properties of CP-completion problems
when some diagonal entries are missing.
Computational results are given in Section 4.
Finally, we conclude the paper in Section 5
with some applications and discussions about future work.

\section{A semidefinite algorithm for CP-completion}

Recently, Nie \cite{Nie} proposed a semidefinite algorithm for
solving $\mathcal {A}$-truncated $K$-moment problems ($\mathcal {A}$-TKMPs),
which are generalizations of classical
truncated $K$-moment problems (cf.~\cite{Helton}).
In this section,  we show how to formulate CP-completion problems
in the framework of $\mathcal{A}$-TKMP,
and then propose a semidefinite algorithm to solve them.

\subsection{CP-completion as $\mathcal{E}$-T$\Delta$MP}

First, we characterize when an $E$-matrix is CP-completable. Let
\begin{equation}
\Delta=\{x\in \mathbb{R}^n :\,
x_1+\cdots +x_n -1= 0, x_1 \geq 0, \ldots, x_n \geq 0\}  \label{KE}
\end{equation}
be  the  standard simplex in $\mathbb{R}^n$. For convenience, denote the polynomials:
\[
h(x):= x_1+\cdots +x_n -1, \, g_1(x): =x_1, \, \ldots,  g_n(x): = x_n .
\]
Let $A$ be an $E$-matrix with the identifying vector $\mathbf{a} \in \mathbb{R}^E$.
We have seen that $A$ is CP-completable if and only if
$\mathbf{a} = C|_E$ for some CP-matrix $C$.
Note that every nonnegative vector is a multiple
of a vector in the simplex $\Delta$.
So, in view of (\ref{CPd}),
$A$ is CP-completable if and only if
there exist vectors
$v_1,\cdots,v_m \in \Delta$ and $\rho_1,\cdots,\rho_m >0$ such that
 \begin{equation}\label{ECPe}
C =\rho_1 v_1 v_1^T +\cdots +\rho_m v_m v_m^T \quad \text{and} \quad \mathbf{a} = C|_E.
 \end{equation}
Let $\mathbb{N}$ be the set of nonnegative integers.
For $\alpha = (\alpha_1,\ldots, \alpha_n) \in \mathbb{N}^n$,
denote $|\alpha| := \alpha_1+\cdots+\alpha_n$. Let
$
\mathbb{N}_d^n := \{ \alpha \in \mathbb{N}^n: \, |\alpha| \leq n\}.
$
Denote
\begin{equation}
\label{AE} \mathcal {E} := \{\alpha
\in \mathbb{N}^n:\, \alpha = e_i + e_j, (i,j)\in E\},
\end{equation}
where $\textbf{e}_i$ is the $i$-th unit vector.
For instance, when $n=3$ and $E=\{(1,2),(2,2),(2,3)\}$, then
$\mathcal {E}=\{(1,1,0),(0,2,0),(0,1,1)\}$. The degree
$deg(\mathcal {E}) := \max\{|\alpha|: \alpha \in \mathcal {E}\}$ is two
for all $E$. Since there is a one-to-one correspondence
between $\mathcal {E}$ and $E$, we can also
index the identifying vectors $\mathbf{a} \in \mathbb{R}^E$
of $E$-matrices as
 \begin{equation}
\mathbf{a}=\label{yalphaE}(\mathbf{a}_{\alpha})_{\alpha\in \mathcal {E}}
\in \mathbb{R}^{\mathcal {E}}, \qquad
\mathbf{a}_{\alpha} =\mathbf{a}_{ij} \quad
\mbox{ if } \quad \alpha = e_i + e_j.
\end{equation}
($\mathbb{R}^{\mathcal {E}}$ denotes the
space of real vectors indexed by elements in $\mathcal {E}$.)
We call such $\mathbf{a}$
an {$\mathcal {E}$-truncated moment sequence} ($\mathcal {E} $-tms) (cf. \cite{Nie}).

The {$\mathcal {E}$-truncated $\Delta$-moment problem}
($\mathcal {E}$-T$\Delta$MP) studies whether or not a given {$\mathcal {E}$-tms $\mathbf{a}$ admits a
$\Delta$-measure} $\mu$, i.e., a nonnegative Borel measure $\mu$ supported
in $\Delta$ such that
\[
\mathbf{a}_{\alpha}= \int_\Delta x^{\alpha} d \mu \quad
\forall \, \alpha \in  \mathcal {E},
\]
where $x^{\alpha} := x^{\alpha_1}_1 \cdots x^{\alpha_n}_n$.
A measure $\mu$ satisfying the above is called a
{$\Delta$-representing measure} for $\mathbf{a}$. A measure is called {finitely
atomic} if its support is a finite set, and is called $m$-atomic if its
support consists of at most $m$ distinct points.
We refer to \cite{Nie} for representing measures
of truncated moment sequences.

Hence, by (\ref{ECPe}), an $E$-matrix $A$, with the identifying vector
$\mathbf{a} \in \mathbb{R}^{\mathcal {E}}$, is CP-completable if and only if $\mathbf{a}$ admits
an $m$-atomic $\Delta$-measure,  i.e.,
\begin{equation}
\label{ECPee} \mathbf{a}=\rho_1 [v_1]_{\mathcal {E}} +\cdots +\rho_m [v_m]_{\mathcal
{E}},
\end{equation}
with each $v_i \in \Delta$ and $\rho_i>0$.
In the above, we denote
\[
[v]_{\mathcal {E}} :=(v^{\alpha})_{\alpha \in
{\mathcal {E}}}.
\]
In other words,
CP-completion problems are equivalent to $\mathcal {E}$-T$\Delta$MPs with
$\mathcal {E}$ and $\Delta$ given by (\ref{AE}) and (\ref{KE}) respectively.

\subsection{A semidefinite algorithm}

We present a semidefinite algorithm for solving CP-completion problems,
by formulating them as $\mathcal {E}$-T$\Delta$MPs.
To describe it, we need to introduce localizing matrices. Denote
\[
\mathbb{R}[x]_{\mathcal {E}}:= \mbox{span}\{x^{\alpha}: \alpha\in \mathcal {E}\}.
\]
We say that $\mathbb{R}[x]_{\mathcal {E}}$ is
{$\Delta$-full} if there exists a polynomial $p \in \mathbb{R}[x]_{\mathcal {E}}$ such
that $p|_{\Delta} >0$ (cf. \cite{Nie4}). Let
$\mathbb{R}[x]_{d}:=\mbox{span}\{x^{\alpha}: \alpha\in \mathbb{N}^n_{d}\}$.
An $\mathcal {E}$-tms $y \in \mathbb{R}^{\mathcal {E}}$
defines an $\mathcal {E}$-Riesz function $\mathscr{L}_{y}$ acting on
$\mathbb{R}[x]_{\mathcal {E}}$ as
 \begin{equation}\label{Ly} \mathscr{L}_{y}
(\sum_{\alpha\in \mathcal {E}}p_{\alpha} x^{\alpha}):=
\sum_{\alpha\in \mathcal {E}}p_{\alpha} y_{\alpha}.
 \end{equation}
For $z \in \mathbb{R}^{\mathbb{N}^n_{2k}}$ and $q
\in \mathbb{R}[x]_{2k}$, the $k$-th localizing matrix  of
$q$ generated by $z$ is the symmetric
matrix $L^{(k)}_q (z)$ satisfying
 \begin{equation}\label{Lzqp2}
\mathscr{L}_z (q p^2)= vec(p)^T (L^{(k)}_q (z)) vec(p) \quad \forall p\in
\mathbb{R}[x]_{k-\lceil deg(q)/2 \rceil}.
 \end{equation}
In the above,
$vec(p)$ denotes the coefficient vector of $p$
in the graded lexicographical ordering, and $\lceil t\rceil$
denotes the smallest integer that is not smaller than $t$.
In particular, when $q=1$, $L^{(k)}_1 (z)$ is called a  $k$-th
order moment matrix  and denoted as $M_k(z)$.
We refer to \cite{Nie, Nie2} for more details
about localizing and moment matrices.

Let $g_0(x) := 1$ and $g_{n+1}(x) := 1 - \|x\|^2_2$. Since
$\Delta \subseteq B(0,1):=\{x\in \mathbb{R}^n : \|x\|^2_{2} \leq 1\}$,
we can also describe $\Delta$ equivalently as
\[
\Delta =\{x\in \mathbb{R}^n: h(x)= 0, g(x) \geq 0\},
\]
where $g(x):= (g_0(x), g_1(x),\ldots,g_n(x), g_{n+1}(x))$.
For instance, when $n = 2$ and $k = 2$,
the second order localizing matrices of the above polynomials are:
{\small
$$
 L^{(2)}_{x_1\!+\! x_2 \!-\!1} (z)=\left[\begin{array}{ccc}
  z_{(1,0)}\!+\!z_{(0,1)}\!-\!z_{(0,0)} &z_{(2,0)}\!+\!z_{(1,1)}\!-\!z_{(1,0)} &z_{(1,1)}\!+\!z_{(0,2)}\!-\!z_{(0,1)}\\
  z_{(2,0)}\!+\!z_{(1,1)}\!-\!z_{(1,0)} &z_{(3,0)}\!+\!z_{(2,1)}\!-\!z_{(2,0)} &z_{(2,1)}\!+\!z_{(1,2)}\!-\!z_{(1,1)} \\
  z_{(1,1)}\!+\!z_{(0,2)}\!-\!z_{(0,1)} &z_{(2,1)}\!+\!z_{(1,2)}\!-\!z_{(1,1)} &z_{(1,2)}\!+\!z_{(0,3)}\!-\!z_{(0,2)}
\end{array}\right],
 $$
  $$
 M_2(z) :=  L^{(2)}_{1} (z)= \left[\begin{array}{cccccc}
  z_{(0,0)} &z_{(1,0)} &z_{(0,1)} &z_{(2,0)} &z_{(1,1)} &z_{(0,2)}\\
  z_{(1,0)} &z_{(2,0)} &z_{(1,1)} &z_{(3,0)} &z_{(2,1)} &z_{(1,2)}\\
  z_{(0,1)} &z_{(1,1)} &z_{(0,2)} &z_{(2,1)} &z_{(1,2)} &z_{(0,3)}\\
  z_{(2,0)} &z_{(3,0)} &z_{(2,1)} &z_{(4,0)} &z_{(3,1)} &z_{(2,2)}\\
  z_{(1,1)} &z_{(2,1)} &z_{(1,2)} &z_{(3,1)} &z_{(2,2)} &z_{(1,3)}\\
  z_{(0,2)} &z_{(1,2)} &z_{(0,3)} &z_{(2,2)} &z_{(1,3)} &z_{(0,4)}
\end{array}\right],
 $$
  $$
 L^{(2)}_{x_1} (z)=\left[\begin{array}{ccc}
  z_{(1,0)} &z_{(2,0)} &z_{(1,1)}\\
  z_{(2,0)} &z_{(3,0)} &z_{(2,1)} \\
  z_{(1,1)} &z_{(2,1)} &z_{(1,2)}
\end{array}\right], \qquad
 L^{(2)}_{x_2} (z)= \left[\begin{array}{ccc}
  z_{(0,1)} &z_{(1,1)} &z_{(0,2)}\\
  z_{(1,1)} &z_{(2,1)} &z_{(1,2)} \\
  z_{(0,2)} &z_{(1,2)} &z_{(0,3)}
\end{array}\right],
$$
$$
 L^{(2)}_{1\!-\!x_1^2\!-\!x_2^2} (z)= \left[\begin{array}{ccc}
 z_{(0,0)}\!-\!z_{(2,0)}\!-\!z_{(0,2)} &z_{(1,0)}\!-\!z_{(3,0)}\!-\!z_{(1,2)} &z_{(0,1)}\!-\!z_{(2,1)}\!-\!z_{(0,3)}\\
 z_{(1,0)}\!-\!z_{(3,0)}\!-\!z_{(1,2)} &z_{(2,0)}\!-\!z_{(4,0)}\!-\!z_{(2,2)} &z_{(1,1)}\!-\!z_{(3,1)}\!-\!z_{(1,3)} \\
 z_{(0,1)}\!-\!z_{(2,1)}\!-\!z_{(0,3)} &z_{(1,1)}\!-\!z_{(3,1)}\!-\!z_{(1,3)} &z_{(0,2)}\!-\!z_{(2,2)}\!-\!z_{(0,4)}
\end{array}\right].
$$
}
As shown in \cite{Nie}, a necessary condition for $z \in \mathbb{R}^{\mathbb{N}^n_{2k}}$
to admit a $\Delta$-measure is
\begin{equation}
\label{SDPC}
  L^{(k)}_{h} (z) = 0, \quad \mbox{and}\quad L^{(k)}_{g_j} (z) \succeq
0, \quad j=0,1,\ldots,n+1.
\end{equation}
(In the above, $X \succeq 0$ means that $X$ is positive semidefinite.)
% Hence (\ref{SDPC}) is necessary for $z$ to admit a $\Delta$-measure.
%However, the converse is not always true.
If, in addition to (\ref{SDPC}), $z$ satisfies the {rank condition}
\begin{equation}
\label{RC}
\text{rank} M_{k-1}(z) =\text{rank} M_{k} (z),
\end{equation}
then $z$ admits a unique
$\Delta$-measure, which is $\text{rank} M_k(z)$-atomic
(cf.~Curto and Fialkow~\cite{CurtoF}).
We say that $z$ is {flat} if (\ref{SDPC}) and (\ref{RC}) are both satisfied.

Given two tms' $y \in \mathbb{R}^{\mathbb{N}^n_{d}}$ and $z \in
\mathbb{R}^{\mathbb{N}^n_{e}}$, we say $z$ is an  extension  of $y$, if $d\leq e$ and
$y_{\alpha} = z_{\alpha}$ for all $\alpha \in \mathbb{N}^n_{d}$. We denote
by $z|_{\mathcal {E}}$ the subvector of $z$, whose entries are indexed by
$\alpha \in \mathcal {E}$. For convenience, we denote by $z|_{d}$
the subvector $z |_{\mathbb{N}^n_{d}}$.
If $z$ is flat and extends $y$, we say $z$ is a {flat
extension} of $y$. Note that an $\mathcal {E}$-tms
$\mathbf{a} \in \mathbb{R}^{\mathcal{E}}$ admits a $\Delta$-measure
if and only if it is extendable to a flat tms $z \in \mathbb{R}^{\mathbb{N}^n_{2k}}$
for some $k$ (cf.~\cite{Nie}).
By (\ref{ECPee}), determining whether an $E$-matrix $A$
is CP-completable or not is equivalent to
investigating whether $\mathbf{a}$ has a flat extension or not.

Let $d >2$ be an even integer. Choose a polynomial
$R \in \mathbb{R}[x]_d$ and write it as
\[
R(x)  = \sum\limits_{\alpha\in \mathbb{N}^n_{d}}R_{\alpha} x^{\alpha}.
\]
Consider the linear optimization problem
\begin{equation}
\label{MP}
  \begin{array}{ll}
  \min\limits_{z}  & \sum\limits_{\alpha\in \mathbb{N}^n_{d}}R_{\alpha} z_{\alpha}\\
  \mbox{s.t.} & z|_{\mathcal {E}} = \mathbf{a}, z \in \Upsilon_d (\Delta),
 \end{array}
\end{equation}
where $\Upsilon_d (\Delta)$ is the set
of all tms' $z \in \mathbb{R}^{\mathbb{N}_d^n}$ admitting $\Delta$-measures.
Note that $\Delta$ is a compact set.
It is shown in \cite{Nie} that if $\mathbb{R}[x]_{\mathcal {E}}$ is $\Delta$-full,
then the feasible set of (\ref{MP}) is compact convex and (\ref{MP}) has a minimizer for all $R$.
If $\mathbb{R}[x]_{\mathcal {E}}$ is not $\Delta$-full, we
need to choose $R$ which is positive definite on $\Delta$,
to guarantee that (\ref{MP}) has a minimizer.
Therefore, to get a minimizer of (\ref{MP}), it is enough to solve (\ref{MP}) for a
generic positive definite $R$, no matter whether $\mathbb{R}[x]_{\mathcal {E}}$ is $\Delta$-full
or not. For this reason, we choose  $R \in \Sigma_{n,d}$, where $\Sigma_{n,d}$ is
the set of all sum of squares polynomials in $n$ variables with degree $d$.
Since $\Upsilon_d (\Delta)$ is typically quite difficult to
describe, we relax it by the cone
\begin{equation}
\label{EIk}
  \Gamma_{k} (h,g) := \left\{z \in \mathbb{R}^{\mathbb{N}^n_{2k}}
  \mid  L^{(k)}_{h} (z) = 0, L^{(k)}_{g_j} (z) \succeq 0,
  j=0,1,\ldots,n+1
  \right\},
\end{equation}
with $k \geq d/2$ an integer. The {$k$-th order semidefinite
relaxation} of (\ref{MP}) is
\begin{equation}
\label{SDPR}
(SDR)_k: \qquad \\
  \left\{\begin{array}{ll}
  \min\limits_{z}  & \sum\limits_{\alpha\in \mathbb{N}^n_d }R_{\alpha} z_{\alpha}\\
  \mbox{s.t.} & z|_{\mathcal {E}} = \mathbf{a}, z \in \Gamma_k(h,g).
 \end{array} \right.
\end{equation}
Based on solving the hierarchy of (\ref{SDPR}),
our semidefinite algorithm for solving CP-completion problems
is as follows.

\balg\label{Algorithm}  A semidefinite algorithm for
solving CP-completion problems.

\textbf{Step 0:} Choose a generic $R \in \Sigma_{n,d}$, and let $k := d/2$.

\textbf{Step 1:} Solve (\ref{SDPR}). If (\ref{SDPR}) is infeasible, then
$\mathbf{a}$ doesn't admit a $\Delta$-measure,
i.e., $A$ is not CP-completable, and
stop. Otherwise, compute a minimizer $z^{*,k}$. Let $t :=1$.

\textbf{Step 2:} Let $w := z^{*,k}|_{2t}$. If the rank condition (\ref{RC})
is not satisfied, go to Step 4.

\textbf{Step 3:} Compute the finitely atomic measure $\mu$ admitted by $w$:
$$\mu = \lambda_1 \delta(u_1) + \cdots + \lambda_m \delta(u_m),$$
where $m = \text{rank} M_t (w)$, $u_i \in \Delta$, $\lambda_i >
0$, and $\delta(u_i)$ is the Dirac
measure supported on the point $u_i$ $(i=1,\cdots,m)$. Stop.

\textbf{Step 4:} If $t < k$, set $t := t+1$ and go to Step 2; otherwise, set
$k := k+1$ and go to Step 1. \ealg

\br\label{remark11}\upshape Algorithm \ref{Algorithm} is a specialization of
Algorithm~4.2 in \cite{Nie} to CP-completion.
Denote $[x]_{d} := (x^{\alpha})_{\alpha \in \mathbb{N}^n_{d}}$. We choose $R = [x]^T_{d/2}
J^T J [x]_{d/2}$ in (\ref{SDPR}), where $J$ is a random square matrix
obeying Gaussian distribution. We check the rank condition
(\ref{RC}) numerically with the help of singular value
decompositions~\cite{Golub}. The rank of a matrix is evaluated as the
number of its singular values that are greater than or equal to
$10^{-6}$. We use the method in \cite{HenrionJ} to get a $m$-atomic
$\Delta$-measure for $w$.\er

\subsection{Some properties of Algorithm 2.1}

We first show some basic properties of Algorithm \ref{Algorithm}
which are from \cite[Section 5]{Nie}.
\begin{theorem}
\label{Algorithmresults}
Algorithm \ref{Algorithm} has the following properties:
\begin{itemize}

\item [1)] If (\ref{SDPR}) is infeasible for some $k$,
then $\mathbf{a}$ admits no $\Delta$-measures and the
corresponding $E$-matrix $A$ is not CP-completable.

\item [2)] If the $E$-matrix $A$ is not CP-completable
and $\mathbb{R}[x]_{\mathcal {E}}$ is $\Delta$-full,
then (\ref{SDPR}) is infeasible for all $k$ big enough.

\item [3)] If the $E$-matrix $A$ is CP-completable, then
for almost all generated $R$,
we  can  asymptotically get a flat extension of $\mathbf{a}$
by solving the hierarchy of (\ref{SDPR}).
This gives a CP-completion of $A$.

\end{itemize}
\end{theorem}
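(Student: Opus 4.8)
The plan is to obtain all three items from the general theory of $\mathcal{A}$-truncated $K$-moment problems of Nie \cite{Nie}, specialized to $\mathcal{A} = \mathcal{E}$ and $K = \Delta$, using the equivalence (\ref{ECPee}) between CP-completability of $A$ and the existence of a finitely atomic $\Delta$-measure for $\mathbf{a}$, together with the fact that the simplex $\Delta$ in (\ref{KE}) is compact. Item 1) I would prove directly by showing that $\Gamma_k(h,g)$ in (\ref{EIk}) is an outer approximation of the true moment cone. Indeed, if $\mathbf{a}$ admitted a $\Delta$-measure $\mu$, then the truncated sequence $z = \left( \int_\Delta x^\alpha \, d\mu \right)_{\alpha \in \mathbb{N}^n_{2k}}$ would satisfy the conditions (\ref{SDPC}): since $h \equiv 0$ on $\Delta$ one gets $\mathscr{L}_z(h p^2) = 0$ for all $p$, whence $L^{(k)}_h(z) = 0$, and since each $g_j \geq 0$ on $\Delta$ one gets $\mathscr{L}_z(g_j p^2) = \int_\Delta g_j p^2 \, d\mu \geq 0$, whence $L^{(k)}_{g_j}(z) \succeq 0$. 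Such a $z$ is feasible for (\ref{SDPR}) with $z|_{\mathcal{E}} = \mathbf{a}$, so infeasibility of (\ref{SDPR}) at any single order $k$ precludes every $\Delta$-measure, and by (\ref{ECPee}) the matrix $A$ is not CP-completable.

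For item 2), the mechanism is asymptotic exactness of the hierarchy under $\Delta$-fullness, combined with a monotonicity observation. First I would record that feasibility is monotone: if $z \in \Gamma_{k+1}(h,g)$ with $z|_{\mathcal{E}} = \mathbf{a}$, then the truncation $z|_{2k}$ lies in $\Gamma_k(h,g)$ and still restricts to $\mathbf{a}$, because each order-$k$ localizing matrix is a principal submatrix of the corresponding order-$(k+1)$ one, so the PSD and zero conditions descend. Hence once (\ref{SDPR}) is infeasible at some order it remains infeasible at all larger orders, and it suffices to exhibit a single infeasible $k$. Arguing by contradiction, if (\ref{SDPR}) were feasible for every $k$, then the convergence theorem of \cite{Nie} applies: $\Delta$-fullness forces the feasible sets to be bounded, ruling out any escape of mass, so a subsequence of feasible points converges to a tms that admits a $\Delta$-measure for $\mathbf{a}$, contradicting non-CP-completability via (\ref{ECPee}). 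Thus some order is infeasible, and by monotonicity so are all sufficiently large ones.

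For item 3), I would invoke the generic convergence theory of \cite{Nie}. When $A$ is CP-completable, $\mathbf{a}$ lies in the moment cone, so (\ref{MP}) is solvable, and the content is that minimizing a \emph{generic} objective $R \in \Sigma_{n,d}$ (here the random $R = [x]^T_{d/2} J^T J [x]_{d/2}$, which is positive definite on $\Delta$) forces the computed minimizers to flatten. The idea is that a generic linear functional is minimized over the moment cone at a unique, finitely atomic optimal measure, and Nie's analysis shows that the truncations of the relaxation minimizers $z^{*,k}$ then asymptotically acquire a flat extension of $\mathbf{a}$. The rank condition (\ref{RC}) detects flatness in Step 2, and the flat extension theorem of Curto and Fialkow \cite{CurtoF} extracts atoms $u_i \in \Delta$ and weights $\lambda_i > 0$ with $\mathbf{a} = \sum_i \lambda_i [u_i]_{\mathcal{E}}$; by (\ref{ECPee}) the matrix $C = \sum_i \lambda_i u_i u_i^T$ is then a CP-completion of $A$ together with an explicit CP-decomposition. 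The phrases ``almost all $R$'' and ``asymptotically'' record precisely that this succeeds for $R$ outside a set of measure zero and in the limit $k \to \infty$.

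The step I expect to be the main obstacle is the genericity argument underlying item 3): proving that for almost every $R$ the limiting optimal moment sequence is genuinely \emph{flat}, and not merely feasible. This is the substantive content carried over from \cite{Nie}; it depends on the geometry of the optimal faces of the moment cone cut out by a generic linear functional, and it is precisely the compactness of $\Delta$ together with the positive-definiteness of $R$ on $\Delta$ that make the limiting extraction of a finitely atomic measure rigorous. Items 1) and 2) are comparatively routine once the outer-approximation and monotonicity facts above are in place.
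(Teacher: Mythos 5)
Your proposal is correct and follows essentially the same route as the paper, which gives no proof of its own but simply cites \cite[Section 5]{Nie}; your three arguments (the outer-approximation property of $\Gamma_k(h,g)$ for item 1, compactness of the feasible sets under $\Delta$-fullness plus monotonicity of the hierarchy for item 2, and Nie's generic flat-truncation analysis for item 3) are exactly the specializations of that reference to $\mathcal{A}=\mathcal{E}$ and $K=\Delta$ that the theorem is relying on. Nothing further is needed beyond what you have written.
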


\br\label{finiteconvergence}
Under some general conditions, which is almost
sufficient and necessary, we can get a flat extension of
$\mathbf{a}$ by solving the hierarchy of (\ref{SDPR}),
within finitely many step (cf.~\cite{Nie}). This always happens in our
numerical experiments.
So, if an $E$-matrix $A$ with the identifying vector
$\mathbf{a} \in \mathbb{R}^{\mathcal {E}}$
is CP-completable, then we can asymptotically get a flat extension of
$\mathbf{a}$ for almost all $R\in \Sigma_{n,d}$ by Algorithm \ref{Algorithm}.
Moreover, it can often be obtained within finitely many steps.
After getting a flat extension of $\mathbf{a}$, we can get a $m$-atomic
$\Delta$-measure for $\mathbf{a}$, which then produces
a CP-completion of $A$, as well as a CP-decomposition.

\er

When $\mathbb{R}[x]_{\mathcal {E}}$ is $\Delta$-full,
Algorithm \ref{Algorithm} can give a certificate for the non-CP-completability. However,
if it is not $\Delta$-full and $A$ is not CP-completable,
it is not clear whether there exists a $k$ such that (\ref{SDPR}) is infeasible.
This is an open question, to the best knowledge of the authors.
We now characterize when
$\mathbb{R}[x]_{\mathcal {E}}$ is $\Delta$-full.

\begin{proposition}
\label{Kfull}
Suppose $E=\{(i_k,j_k) \mid 1\leq i_k \leq j_k \leq n,
k=1,\cdots,l\}$. Then, $\mathbb{R}[x]_{\mathcal {E}}$ is
$\Delta$-full if and only if $\{(i,i):\, 1\leq i\leq
n\}\subseteq E$.
\end{proposition}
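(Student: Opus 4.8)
The plan is to use the fact that $\mathbb{R}[x]_{\mathcal{E}}$ consists entirely of homogeneous quadratic forms whose pure-square terms $x_i^2$ are available only for those indices $i$ with $(i,i)\in E$, and to test candidate forms at the vertices $e_1,\dots,e_n$ of the simplex $\Delta$. The whole argument turns on one elementary computation: for $\alpha=e_i+e_j\in\mathcal{E}$, the monomial $x^\alpha$ evaluated at the vertex $e_{i_0}$ equals $1$ exactly when $\alpha$ is supported on the single coordinate $i_0$ (that is, $\alpha=2e_{i_0}$) and equals $0$ otherwise.

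For the sufficiency direction, I would assume $\{(i,i):1\le i\le n\}\subseteq E$, so that $2e_i\in\mathcal{E}$ for every $i$ and hence $p(x):=x_1^2+\cdots+x_n^2$ lies in $\mathbb{R}[x]_{\mathcal{E}}$. Since the constraint $x_1+\cdots+x_n=1$ keeps the origin out of $\Delta$, this $p$ is strictly positive on $\Delta$ (one could even record $p|_\Delta\ge 1/n$ by Cauchy--Schwarz, but positivity alone is all that is needed), which exhibits $\Delta$-fullness.

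For the necessity direction I would argue the contrapositive. Suppose $(i_0,i_0)\notin E$ for some $i_0$; then $2e_{i_0}\notin\mathcal{E}$, so by the computation above every basis monomial $x^\alpha$ with $\alpha\in\mathcal{E}$ vanishes at $e_{i_0}$, and therefore every $p\in\mathbb{R}[x]_{\mathcal{E}}$ satisfies $p(e_{i_0})=0$. Because $e_{i_0}\in\Delta$, no element of $\mathbb{R}[x]_{\mathcal{E}}$ can be positive throughout $\Delta$, so $\mathbb{R}[x]_{\mathcal{E}}$ is not $\Delta$-full, completing the contrapositive.

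Once the vertices are identified as the right test points, the proof is routine, and I do not expect a genuine obstacle. The only step deserving a careful statement is the vanishing identity $x^\alpha(e_{i_0})=0$ for all $\alpha\in\mathcal{E}$ used in the necessity direction, which rests on the observation that among degree-two monomials only the pure square $x_{i_0}^2$ survives at the vertex $e_{i_0}$.
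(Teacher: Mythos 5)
Your proof is correct and follows essentially the same route as the paper: sufficiency via the polynomial $x_1^2+\cdots+x_n^2$, which is strictly positive on $\Delta$, and necessity by evaluating every element of $\mathbb{R}[x]_{\mathcal{E}}$ at the vertex $e_{i_0}$ (the paper phrases this as a contradiction rather than a contrapositive, which is an immaterial difference). No gaps.
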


\begin{proof}
We first prove the sufficient condition. If $\{(i,i),1\leq i\leq
n\}\subseteq E$, then
$\{(2,0,\cdots,0),(0,2,\cdots,0),\cdots,(0,0,\cdots,2)\}$ $\subseteq
\mathcal {E}$, so we have $x^2_i \in
\mathbb{R}[x]_{\mathcal {E}}$ for all $1\leq i\leq
n$. Hence for any $x\in \Delta$, there exists a polynomial
$p(x)=\sum^n_{i=1} x_i^2 \in \mathbb{R}[x]_{\mathcal
{E}}$ such that $p(x)>0$. Thus
$\mathbb{R}[x]_{\mathcal {E}}$ is $\Delta$-full.

We prove the necessary condition by contradiction. Suppose there exists some $i_0 \in
\{1,\cdots,n\}$ such that $(i_0,i_0)\notin E$.
For any polynomial $p(x)\in \mathbb{R}[x]_{\mathcal
{E}}$, $p(x)$ is a linear combination of all the monomials of degree 2 except
$x_{i_0}^2$. Let $c=(0,\cdots,0,1_{i_0},0,\cdots,0)^T \in
\Delta$ be a constant vector, then $p(c)=0$ holds for all
$p(x)\in \mathbb{R}[x]_{\mathcal {E}}$. Hence,
there does not exit any polynomial $p(x)\in \mathbb{R}[x]_{\mathcal {E}}$ such that $p(x)|_{\Delta}>0$. This
contradicts the fact that $\mathbb{R}[x]_{\mathcal
{E}}$ is $\Delta$-full. The proof is completed. \end{proof}

\br\label{remarka2}\upshape Proposition \ref{Kfull}
shows that $\mathbb{R}[x]_{\mathcal {E}}$ is
$\Delta$-full if and only if all the diagonal entries
are given. In such case,
Algorithm \ref{Algorithm} can determine whether
an $E$-matrix $A$ can be completed to a
CP-matrix or not. If $A$ is CP-completable, Algorithm \ref{Algorithm}
can give a CP-completion with a CP-decomposition.
If $A$ is not CP-completable, then
it can give a certificate, i.e., (\ref{SDPR}) is infeasible for some $k$.

\er

\section{CP-completion with missing diagonal entries}

When all the diagonal entries are given,
which is equivalent to that $\mathbb{R}[x]_{\mathcal{E}}$ is $\Delta$-full,
the properties of Algorithm~\ref{Algorithm}
are summarized in Theorem~\ref{Algorithmresults} and Remark~\ref{remarka2}.
In this section, we study properties of CP-completion problems
when some diagonal entries are missing.

\subsection{All diagonal entries are missing}

Consider $E$-matrices with
all the diagonal entries missing,
i.e., $(i,i) \not\in E$ for all $i$.
In such case, CP-completion is relatively simple, as shown below.

\begin{proposition}
\label{noknow}
Let $A$ be an $E$-matrix whose entries are all nonnegative.
If all the diagonal entries of $A$ are missing, then $A$
has a CP-completion.
\end{proposition}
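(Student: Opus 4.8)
The plan is to show directly that any entrywise nonnegative $E$-matrix $A$ with all diagonal entries missing can be completed to a completely positive matrix. Since the diagonal is entirely free, the strategy is to exhibit an explicit CP-decomposition whose off-diagonal entries match the prescribed data $\mathbf{a}$ on $E$, while the diagonal entries (unconstrained by $E$) can be chosen freely to whatever the construction produces. The key observation is that we have complete freedom to make the diagonal as large as we like, and a large diagonal is exactly what helps a nonnegative matrix become completely positive.

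First I would reduce to constructing, for each prescribed off-diagonal entry, a simple rank-one nonnegative contribution. For each pair $(i,j) \in E$ with $i \neq j$, the given value $A_{ij} = \mathbf{a}_{ij} \geq 0$ can be produced by a single nonnegative vector supported on coordinates $i$ and $j$: take $u = \sqrt{A_{ij}}\,(e_i + e_j)$, so that $u u^T$ has value $A_{ij}$ in the $(i,j)$ and $(j,i)$ positions, value $A_{ij}$ on the diagonal positions $(i,i)$ and $(j,j)$, and zero elsewhere. Summing these rank-one terms over all off-diagonal pairs in $E$ yields a matrix $C = \sum_{(i,j)\in E,\, i\neq j} A_{ij}(e_i+e_j)(e_i+e_j)^T$, which is manifestly a sum of nonnegative rank-one terms and hence completely positive by the definition in~(\ref{CPd}). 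By construction, the off-diagonal entries of $C$ indexed by $E$ agree with $\mathbf{a}$, and since all diagonal positions are missing from $E$, the induced diagonal values of $C$ place no constraint — they are a valid assignment to the missing diagonal entries. Therefore $C$ is a CP-completion of $A$, i.e. $\mathbf{a} = C|_E$.

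The only subtlety, which I would address explicitly, concerns off-diagonal pairs $(i,j) \in E$ with $A_{ij} = 0$: these contribute nothing, which is fine, but I should confirm the completion is consistent when some row of the specification graph is isolated or when $A_{ij}=0$ forces nothing. Because the missing diagonal is free, there is no positive-semidefiniteness obstruction of the kind seen in Example~(\ref{ex223}), where a fixed diagonal made the determinant sign-definite; here we simply inherit whatever diagonal the sum produces. I do not anticipate a genuine obstacle, since the construction is entirely explicit and every term is visibly nonnegative rank-one; the main thing to state carefully is that the matching requirement $\mathbf{a} = C|_E$ only involves off-diagonal indices, so the diagonal of $C$ is unconstrained and automatically valid. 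Thus the proposition follows immediately from the constructive CP-decomposition.
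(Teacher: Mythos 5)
Your construction $C=\sum_{(i,j)\in E,\ i\neq j}A_{ij}(e_i+e_j)(e_i+e_j)^T$ is exactly the paper's own proof (its formula~(\ref{3.1}) sums over all of $E$, but since no diagonal index lies in $E$ the two sums coincide), and your verification that the off-diagonal entries match while the free diagonal absorbs whatever the sum produces is the same reasoning. The proposal is correct and takes essentially the same approach as the paper.
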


\begin{proof}
The matrix
\begin{equation}\label{3.1}
C =\sum_{(i,j)\in E} {A_{ij} (\textbf{e}_i+\textbf{e}_j) (\textbf{e}_i+\textbf{e}_j)^T}
\end{equation}
is clearly completely positive, because each $A_{ij} \geq 0$.
It is easy to check that $C$ is a CP-completion of $A$.
\end{proof}

\br\label{remark2}\upshape
By Proposition \ref{noknow},
every nonnegative $E$-matrix is CP completable,
when all diagonal entries are missing.  In such case,
Algorithm \ref{Algorithm} typically gives a CP-decomposition
whose length is much smaller than the length in the proof
of Proposition \ref{noknow},
which is the cardinality of $E$.
This is an advantage of Algorithm \ref{Algorithm}.
\er

\subsection{Diagonal entries are partially missing}

We consider $E$-matrices whose diagonal entries
are not all missing, i.e.,
the set $E$ contains at least one but not all
of $(1,1), \ldots, (n,n)$.
Suppose the diagonal entry indices in $E$
are $(i_1,i_1), \ldots, (i_r, i_r)$. Let
\[
\hat{E} = \{ (i,j) \in E:\,
i,  j \in \{i_1,\ldots, i_r\} \}.
\]

Let $A$ be an $E$-matrix.
An $\hat{E}$-matrix $P$ is called
the maximum principal submatrix of $A$ if
$P_{ij} = A_{ij}$ for all $(i,j) \in \hat{E}$.
If $P$ is CP-completable, we say that $A$ is partially CP-completable.
Clearly, if $A$ is CP-completable,
then $P$ is also CP-completable.
This immediately leads to the following proposition.

\begin{proposition}
\label{partknow}
If the maximum principal submatrix of an $E$-matrix $A$
is not CP-completable, then $A$ is not CP-completable.
\end{proposition}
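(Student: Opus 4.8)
The plan is to prove the contrapositive: I will show that if $A$ is CP-completable, then its maximum principal submatrix $P$ is CP-completable. So suppose $A$ is CP-completable. By definition there is a CP-matrix $C \in \mathbb{R}^{n\times n}$ with $C|_E = \mathbf{a}$, and by the CP-decomposition (\ref{CPd}) we may write $C = BB^T$ for an entrywise nonnegative matrix $B$. Let $S = \{i_1,\ldots,i_r\}$ denote the set of indices whose diagonal entries are prescribed in $E$, and let $C_S$ be the principal submatrix of $C$ obtained by retaining exactly the rows and columns indexed by $S$.

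First I would observe that $C_S$ is itself completely positive. Indeed, if $B_S$ denotes the submatrix of $B$ consisting of the rows indexed by $S$, then $C_S = B_S B_S^T$, and $B_S$ is again entrywise nonnegative; hence $C_S$ is a CP-matrix.

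Next I would check that $C_S$ is a CP-completion of $P$, i.e. that its prescribed entries match those of $P$. By the definition of $\hat{E}$, every $(i,j)\in \hat{E}$ satisfies both $(i,j)\in E$ and $i,j\in S$, so the $(i,j)$-entry of $C_S$ equals $C_{ij} = A_{ij} = P_{ij}$. Thus $C_S$ is a completely positive matrix agreeing with $P$ on all of $\hat{E}$, which means $P$ is CP-completable. This establishes the contrapositive, and Proposition \ref{partknow} follows at once.

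The argument is short and presents no serious obstacle; the only point requiring attention is the bookkeeping of the index sets. Specifically, one must confirm that passing from $C$ to the principal submatrix indexed by $S$ carries the prescribed entries of $A$ lying within $S$ precisely onto the prescribed entries of $P$ (those indexed by $\hat{E}$), with no prescribed entry of $P$ left unaccounted for. The fact that a principal submatrix of a CP-matrix is again CP is immediate from the factorization $C=BB^T$, as indicated above.
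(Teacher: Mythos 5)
Your proof is correct and is essentially the paper's own argument (which the paper states only in one line, noting that a CP-completion of $A$ restricts to a CP-completion of $P$ because every principal submatrix of a CP-matrix is CP). The contrapositive formulation and the factorization $C_S=B_SB_S^T$ are exactly the intended reasoning, so nothing further is needed.
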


%%%%%%%%%%%%
%% this proof can be droped
%%%%%%%%%%%%
\iffalse

\begin{proof}
Suppose the specified entries of the $E$-matrix are $A_{ij}$ with $(i,j)\in E$,
and the order of the maximum principal submatrix of the nonnegative $E$-matrix is $r$, where $1\leq r\leq n$.
Without loss of generality, we assume the first $r$ diagonals are specified and positive.

We prove by contradiction. Suppose the $E$-matrix can be completed to a CP-matrix $A$.
Note that any principal submatrix of a CP-matrix is a CP-matrix,
we know that the $r$-th leading principal submatrix of $A$ denoted by $A[1,\cdots,r]$ is a CP-matrix.
Obviously, $A[1,\cdots,r]$ is a CP-completion of the maximum principal submatrix,
which implies the contradiction. The proof is completed.
\end{proof}

\fi
%%%%%%%%%%%%%%%%%%%%%%%%%%%%%%%

\br\label{remark3}\upshape
The converse of Proposition \ref{partknow} is
not necessarily true. For example, consider the $E$-matrix $A$ given as
\begin{equation}\label{cex}
\left[\begin{array}{ccc}
  1& 1 &2\\
  1& 1 &3\\
  2& 3 &\ast
\end{array}\right].
\end{equation}
The determinant of $A$ is always $-1$,
no matter what the $(3,3)$-entry is.
So, it can not be positive semidefinite,
and hence $A$ is not CP-completable.
However, its maximum principal submatrix $P$ is
completely positive, so $A$ is partially CP-completable.
%\[
%\left[\begin{array}{cc}
%  1& 1\\
%  1& 1
%\end{array}\right]
%\]
% is CP.
\er

Though the $E$-matrix $A$ in (\ref{cex}) is not CP-completable,
we can show that there exists
a sequence $\{A_k\}$ of CP-completable $E$-matrices
such that their
identifying vectors converge to the one of $A$.

\begin{theorem}
\label{convergence}
Suppose the maximum principal submatrix of an $E$-matrix $A$,
with the identifying vector $\mathbf{a} \in \mathbb{R}^{\mathcal {E}}$,
is CP-completable.
Then there exists a sequence of
CP-completable $E$-matrices $\{ A_k \}$,
 whose identifying vectors converge to $\mathbf{a}$.
\end{theorem}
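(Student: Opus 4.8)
The plan is to build the sequence explicitly, starting from a completely positive completion of the maximum principal submatrix and then repairing the remaining prescribed entries with nonnegative rank-one terms. The structural fact I would exploit is that every index whose diagonal entry is missing is completely unconstrained on its own diagonal, so I can reproduce any off-diagonal entry touching such an index while diverting the resulting diagonal ``mass'' onto that free diagonal. After relabeling the coordinates (a permutation similarity preserves complete positivity) I assume the given diagonal indices are $1,\dots,r$, so that $\hat E$ is exactly the set of pairs in $E$ with both indices at most $r$, and every pair $(p,q)\in E\setminus\hat E$ is off-diagonal with $p<q$ and $q>r$. By hypothesis the maximum principal submatrix is CP-completable, so I fix an $r\times r$ CP-matrix $\bar P$ with $\bar P_{ij}=\mathbf{a}_{ij}$ for all $(i,j)\in\hat E$.

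Concretely, for $k=1,2,\dots$ I would set
\[
C_k \;=\; \widehat P_0 \;+\; \sum_{(p,q)\in E\setminus\hat E}\Bigl(\tfrac1k\,\textbf{e}_p + k\,\mathbf{a}_{pq}\,\textbf{e}_q\Bigr)\Bigl(\tfrac1k\,\textbf{e}_p + k\,\mathbf{a}_{pq}\,\textbf{e}_q\Bigr)^{T},
\]
where $\widehat P_0$ is the $n\times n$ matrix carrying $\bar P$ in its leading $r\times r$ block and zeros elsewhere. Since all prescribed entries are nonnegative, every vector here is entrywise nonnegative, so $C_k$ is a sum of nonnegative rank-one matrices and is therefore completely positive; hence the $E$-matrix $A_k$ with identifying vector $C_k|_E$ is CP-completable. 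It then remains only to check that $C_k|_E\to\mathbf{a}$.

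The verification splits by the type of index pair. For $(i,j)\in\hat E$ with $i\neq j$, every correction term is supported on a pair containing some $q>r$, so it contributes nothing to entries whose indices both lie in $\{1,\dots,r\}$; thus only $\widehat P_0$ contributes, giving the exact value $\bar P_{ij}=\mathbf{a}_{ij}$. For a prescribed off-diagonal entry $(p,q)\in E\setminus\hat E$, the term for the pair $\{p,q\}$ contributes exactly $(\tfrac1k)(k\,\mathbf{a}_{pq})=\mathbf{a}_{pq}$, and no other term contributes, since distinct off-diagonal pairs have distinct two-element supports; the large factor $k\,\mathbf{a}_{pq}$ loads a mass $k^2\mathbf{a}_{pq}^2$ onto $(q,q)$, which is harmless because $q>r$ makes $(q,q)$ a missing entry. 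The only genuinely constrained entries that move are the diagonals $(i,i)$ with $i\le r$, each of which receives an extra $1/k^2$ from every correction term supported at $i$, so it equals $\mathbf{a}_{ii}+O(1/k^2)$ and converges to $\mathbf{a}_{ii}$.

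The main obstacle, and the reason one must settle for a convergent sequence rather than an exact completion (as Remark~\ref{remark3} shows is necessary), is precisely this unavoidable perturbation of the constrained diagonals: reproducing a cross entry $(p,q)$ with $p\le r$ forces a strictly positive contribution to $(p,p)$. The device that resolves it is the asymmetric scaling $\tfrac1k\,\textbf{e}_p+k\,\mathbf{a}_{pq}\,\textbf{e}_q$, which keeps the off-diagonal target exact for every $k$ while shrinking the contribution to the constrained diagonal to $O(1/k^2)$ and shunting the compensating mass onto the free diagonal $(q,q)$. Once the cases above are confirmed, $C_k|_E\to\mathbf{a}$ is immediate, and $\{A_k\}$ is the desired sequence of CP-completable $E$-matrices.
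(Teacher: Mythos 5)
Your construction is correct and is essentially the paper's own argument: both embed a CP-completion of the maximum principal submatrix and restore each cross entry $(p,q)$ with an asymmetric nonnegative rank-one term $(\epsilon\,\textbf{e}_p+\epsilon^{-1}\mathbf{a}_{pq}\textbf{e}_q)(\cdot)^T$, which perturbs the constrained diagonals by $O(\epsilon^2)$ and dumps the compensating mass onto the free diagonals. The only difference is cosmetic: the paper writes out just the case of a single missing diagonal entry (with scaling $\sqrt{\varepsilon_k}$ in place of your $1/k$) and asserts the general case is the same, whereas you carry out the general case directly.
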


\begin{proof}
If all the diagonal entries are given,
the theorem is clearly true.

First, we assume exactly one diagonal entry is missing.
Without loss of generality, we assume
$A$ is given in the following form
\begin{equation}\label{ConverExa}
\left[\begin{array}{cccc}
  & & &A_{1,n}\\
  & A'& &\vdots\\
  & & & A_{n-1,n}\\
  A_{n,1}&\cdots & A_{n,n-1}&\ast
\end{array}\right],
\end{equation}
where $A'$ is the maximum principal submatrix of $A$.
If some of the entries $A_{i,n}, A_{n,i}$ ($i=1,\ldots,n-1$) are missing,
we assign the constant value $1$ to them.
The matrix $A'$ is CP-completable, by assumption,
and all its diagonal entries are given.
Consider the following sequence of $E$-matrices:
\begin{equation}
\label{approximation}
\left[\begin{array}{cccc}
  & & &A_{1,n}\\
  & A'+  \varepsilon_k I_{n-1}& &\vdots\\
  & & & A_{n-1,n}\\
  A_{n,1}&\cdots & A_{n,n-1}&\ast
\end{array}\right],\quad k=1,2,\cdots,
\end{equation}
where $I_{n-1}$ is the identity matrix of order $n-1$
and $0 < \varepsilon_k \to 0$ as $k \to \infty$.
Let $\overline{A'}$ be a CP-completion of $A'$, and
$$
A_k = \left[\begin{array}{cc}
  \overline{A'}& \textbf{0}\\
  \textbf{0}^T& 1
\end{array}\right]
+ \sum\limits_{1\leq i\leq n-1} ( \sqrt{\varepsilon_k} \textbf{e}_i
+ \sqrt{\varepsilon_k}^{-1} A_{i,n}  \textbf{e}_{n})
( \sqrt{\varepsilon_k} \textbf{e}_i
+ \sqrt{\varepsilon_k}^{-1} A_{i,n}  \textbf{e}_{n})^T,
$$
where $\textbf{0}$ is the zero vector.
Clearly, $A_k$ is a CP-completion of the matrix in (\ref{approximation}),
and the sequence $\{A_k|_E\}$ converges
to $\mathbf{a}$ as $k \rightarrow +\infty$.

Second, when two or more diagonal entries are missing,
the proof is same as in the above. We omit it here for cleanness.
\end{proof}

\br\label{noteonconvergence}
Theorem \ref{convergence} implies that
the set of all $CP$-completable $E$-matrices is not closed,
if some diagonal entries are missing.
\er

When an $E$-matrix has only one given diagonal entry,
there is a nice property of CP-completion.

\begin{proposition}
\label{onepartknow}
Let $A$ be a nonnegative $E$-matrix.
If only one diagonal entry of $A$ is given and it is positive,
then $A$ is CP-completable.
\end{proposition}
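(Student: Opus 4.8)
The plan is to exhibit an explicit CP-decomposition, reusing the rank-one gadgets from Proposition~\ref{noknow} and from the proof of Theorem~\ref{convergence}. After relabeling, I assume the unique given diagonal entry is in position $(n,n)$, with value $d := A_{nn} > 0$, while every other diagonal entry is missing. I partition the given off-diagonal entries into those avoiding the index $n$, namely $A_{ij}$ with $(i,j) \in E$, $i \neq j$, $i,j \le n-1$, and those of the form $A_{in}$ with $(i,n) \in E$, $i \le n-1$.

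For each off-diagonal entry avoiding $n$, I use the term $A_{ij}(\textbf{e}_i + \textbf{e}_j)(\textbf{e}_i + \textbf{e}_j)^T$ exactly as in Proposition~\ref{noknow}: it realizes $A_{ij} \ge 0$ in position $(i,j)$ and otherwise touches only the missing, unconstrained diagonals $(i,i)$ and $(j,j)$. For each given entry $A_{in}$, I instead take a rank-one term $(a_i \textbf{e}_i + b_i \textbf{e}_n)(a_i \textbf{e}_i + b_i \textbf{e}_n)^T$ with $a_i, b_i > 0$ and $a_i b_i = A_{in}$; this matches the $(i,n)$ entry and feeds only the missing diagonal $(i,i)$ together with $(n,n)$. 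Because distinct rank-one terms produce no cross terms, none of these gadgets disturbs any other given entry, so on $E$ the only coupling is through the single entry $(n,n)$.

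The only genuine constraint, and the main obstacle, is that the mass these terms deposit in $(n,n)$, which equals $\sum_i b_i^2$, must not exceed $d$, since I intend to top it up to exactly $d$ with a residual term $(d - \sum_i b_i^2)\,\textbf{e}_n \textbf{e}_n^T$. Here positivity of $d$ is decisive: holding $a_i b_i = A_{in}$ fixed while letting $b_i \to 0$ (so $a_i = A_{in}/b_i$ grows) makes $\sum_i b_i^2$ as small as I wish, in particular strictly below $d$. With such a choice the matrix $C$ formed by summing the three families of nonnegative rank-one terms is completely positive and agrees with $\mathbf{a}$ on every index of $E$; hence it is a CP-completion of $A$. (If all $A_{in} = 0$, no $n$-terms are needed and $C = d\,\textbf{e}_n \textbf{e}_n^T$ plus the Proposition~\ref{noknow} gadgets already works.)
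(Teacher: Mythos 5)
Your proof is correct and takes essentially the same approach as the paper: an explicit CP-completion built from one nonnegative rank-one gadget per given entry, exploiting that the missing diagonals are unconstrained; the only cosmetic difference is that the paper distributes the given diagonal value exactly as $A_{11}/(\tilde n-1)$ over the row-$1$ gadgets, while you undershoot it with small $b_i$ and top up with a residual $\textbf{e}_n\textbf{e}_n^T$ term. (One trivial touch-up: when some but not all $A_{in}=0$, the condition $a_i,b_i>0$ with $a_ib_i=A_{in}$ is unsatisfiable for those $i$; simply omit those gadgets, as the $(i,n)$ entry is then already $0$.)
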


\begin{proof} Without loss of generality, we assume the first diagonal is given and positive.
Let $\tilde n$ be the number of the given entries in the first row.
So, $1\leq \tilde n\leq n$. If $\tilde n=1$, let
\begin{eqnarray*}
C = A_{11} \textbf{e}_1 \textbf{e}_1^T
+ \sum\limits_{2\leq i<j\leq n, (i,j)\in E} {A_{ij} (\textbf{e}_i + \textbf{e}_j)(\textbf{e}_i + \textbf{e}_j)^T}.
\end{eqnarray*}
Then $C$ is a CP-completion of $A$.
Otherwise, if $\tilde n>1$,   let
\begin{eqnarray*}
C &=& \sum\limits_{1=i<  j\leq n, (1,j)\in E}
({\sqrt{\frac{A_{11}}{\tilde n-1}} \textbf{e}_1 + \sqrt{\frac{\tilde n-1}{A_{11}}} A_{1j} \textbf{e}_j})
({\sqrt{\frac{A_{11}}{\tilde n-1}} \textbf{e}_1 + \sqrt{\frac{\tilde n-1}{A_{11}}} A_{1j} \textbf{e}_j})^T \\
&&\quad + \sum\limits_{2\leq i<j\leq n, (i,j)\in E} {A_{ij} (\textbf{e}_i + \textbf{e}_j)(\textbf{e}_i + \textbf{e}_j)^T}.
\end{eqnarray*}
It can be easily checked that $C$ is a CP-completion of $A$.
\end{proof}

\br\label{remark4}
1) If an $E$-matrix is not partially CP-completable, a certificate
(i.e., the relaxation (\ref{SDPR}) is infeasible)
can be obtained by applying Algorithm \ref{Algorithm} to
its maximum principle submatrix.
2) If an $E$-matrix is partially CP-completable,
then Algorithm \ref{Algorithm} can
give a CP-completion, up to an arbitrarily tiny positive perturbation
(applied to given diagonal entries).
\er

\section{Numerical experiments}

In this section, we present numerical experiments
for solving CP-completion problems by using Algorithm \ref{Algorithm}.
We use software GloptiPoly~3 \cite{HenrionJJ} and SeDuMi \cite{Sturm}
to solve semidefinite relaxations in (\ref{SDPR}).
We choose $d=4$ and $k=2$ in Step 0 of Algorithm \ref{Algorithm}.

\bex\label{Example1} \upshape
Consider the $E$-matrix $A$ given as (cf. \cite[Exercise 2.57]{BermanN}):
\begin{equation}\label{Exa1}
\left[\begin{array}{cccc}
  b&3 &0 &\ast\\
  3&6 &3 &0\\
  0&3 &6 &3\\
  \ast&0 &3 &b
\end{array}\right],
\end{equation}
where $b \geq 0$ is a parameter.
For a symmetric nonnegative matrix of order $n\leq 4$,
it is completely positive if and only if
it is positive semidefinite (cf. \cite{MaxfieldM}),
i.e., all its principal minors are nonnegative.
Let $c=A_{14}$, the missing value.
Then $A$ is completely positive if and only if
$$
b\geq 0, c\geq 0, 2b-3\geq 0, b-2\geq 0, 2b^2-3b-2c^2\geq 0, (b-2)^2-(c+1)^2\geq 0.
$$
The above is satisfiable if and only if $b\geq 3$, i.e.,
$A$ is CP-completable if and only if $b \geq 3$.
When $b=3$, $A$ is CP-completable only for $c=0$.

We choose $b=3$ and apply Algorithm \ref{Algorithm}.
It terminates at Step 3 with $k=3$,
and gives the CP-completion
\[
A=\left[\begin{array}{cccc}
  3&3 &0 & 0.0000\\
  3&6 &3 &0\\
  0&3 &6 &3\\
  0.0000&0 &3 &3
\end{array}\right] =\sum_{i=1}^{3} \rho_i u_i u_i^T,
\]
where $u_i$ and $\rho_i$ are given in Table \ref{ex1}.
\begin{table}[htbp]
\begin{center} \footnotesize
\renewcommand{\arraystretch}{1.2}
\begin{tabular}{|c|c|c|} \hline
$i$ &$u_i$ & $\rho_i$ \\ \hline
$1$ &$(0.5000, 0.5000, 0.0000, 0.0000)^T$ & 12.0000 \\ \hline
$2$ &$(0.0000, 0.0000, 0.5000, 0.5000)^T$ & 12.0000 \\ \hline
$3$ &$(0.0000, 0.5000, 0.5000, 0.0000)^T$ & 12.0000 \\ \hline
\end{tabular}
\end{center}
\caption{The points $u_i$ and weights $\rho_i$ in Example \ref{Example1}.}
\label{ex1}
\end{table}

\eex

\bex\label{Example2}  \upshape
Consider the $E$-matrix $A$ given as:
\begin{equation}\label{Exa2}
\left[\begin{array}{ccccc}
  \ast&4 &1 &2 &2\\
  4&\ast &0 &1 &3\\
  1&0 &\ast &1 &2\\
  2&1 &1 &\ast &1\\
  2&3 &2 &1 &\ast
\end{array}\right].
\end{equation}
All its diagonal entries are missing.
By Proposition \ref{noknow}, we know $A$ is CP-completable.
We apply Algorithm \ref{Algorithm}.
It terminates at Step 3 with $k=3$, and gives the CP-completion:
\[
 \left[\begin{array}{ccccc}
  5.8127&4 &1 &2 &2\\
  4&4.6697 &0 &1 &3\\
  1&0 &2.2682 &1 &2\\
  2&1 &1 &0.9087 &1\\
  2&3 &2 &1 &4.7740
\end{array}\right] =\sum_{i=1}^{3} \rho_i u_i u_i^T,
\]
where $u_i$ and $\rho_i$ are shown in Table \ref{ex2}.
\begin{table}[htbp]
\begin{center} \footnotesize
\renewcommand{\arraystretch}{1.2}
\begin{tabular}[t]{|c|c|c|} \hline
 $i$ &$u_i$ & $\rho_i$ \\ \hline
$1$ &$(0.1595, 0.0000, 0.3619, 0.1595, 0.3191)^T$ & 17.3224 \\ \hline
$2$ &$(0.1122, 0.4258, 0.0000, 0.0000, 0.4620)^T$ & 13.9667 \\ \hline
$3$ &$(0.4957, 0.3179, 0.0000, 0.1488, 0.0376)^T$ & 21.1443 \\ \hline
\end{tabular}
\end{center}
\caption{The points $u_i$ and weights $\rho_i$ in Example \ref{Example2}.}
\label{ex2}
\end{table}
The length of the CP-decomposition is 3,
which is much shorter than 9 given by (\ref{3.1}).
This shows an advantage of Algorithm~\ref{Algorithm}.
\eex

\bex\label{Example33} \upshape
Consider the $E$-matrix $A$ given as
\begin{equation}\label{Exa33}
\left[\begin{array}{ccccc}
  6.1232&4.1232 &1.1233 &2.1231 &2.3321\\
  4.1232&\ast &0 &1.0987 &3.2873\\
  1.1233&0 &3.2318 &1.2332 &2.1232\\
  2.1231&1.0987 &1.2332 &\ast &1.1232\\
  2.3321&3.2873 &2.1232 &1.1232 &\ast
\end{array}\right].
\end{equation}
By Proposition \ref{Kfull}, the space $\mathbb{R}[x]_{\mathcal {E}}$ is not $\Delta$-full.
We apply Algorithm \ref{Algorithm}.
It terminates at Step 3 with $k=4$, and gives the CP-completion:
\[
  \left[\begin{array}{ccccc}
  6.1232&4.1232 &1.1233 &2.1231 &2.3321\\
  4.1232&5.5494 &0 &1.0987 &3.2873\\
  1.1233&0 &3.2318 &1.2332 &2.1232\\
  2.1231&1.0987 &1.2332 &1.0430 &1.1232\\
  2.3321&3.2873 &2.1232 &1.1232 &3.6641
\end{array}\right] =\sum_{i=1}^{4} \rho_i u_i u_i^T,
\]
where $u_i$ and $\rho_i$ are shown in Table \ref{ex33}.
\begin{table}[htbp]
\begin{center} \footnotesize
\renewcommand{\arraystretch}{1.2}
\begin{tabular}{|c|c|c|} \hline
 $i$ &$u_i$ & $\rho_i$ \\ \hline
$1$ &$(0.3499, 0.3637, 0.0000, 0.0807, 0.2057)^T$ & 32.4007 \\ \hline
$2$ &$(0.0000, 0.5555, 0.0000, 0.0650, 0.3795)^T$ & 4.0941 \\ \hline
$3$ &$(0.4805, 0.0000, 0.2503, 0.2692, 0.0000)^T$ & 9.3406 \\ \hline
$4$ &$(0.0000, 0.0000, 0.4925, 0.1124, 0.3951)^T$ & 10.9106 \\ \hline
\end{tabular}
\end{center}
\caption{The points $u_i$ and weights $\rho_i$ in Example \ref{Example33}.}
\label{ex33}
\end{table}
This also shows a nice property of Algorithm \ref{Algorithm}:
if it exists, a CP-completion can be found,
even if $\mathbb{R}[x]_{\mathcal {E}}$ is not $\Delta$-full.
\eex

\bex\label{Example4}\upshape
Consider the $E$-matrix $A$ given as:
\begin{equation}\label{Exa4}
\left[\begin{array}{cccccc}
  \ast&7 &1 &3 &9 &\ast\\
  7&\ast &5 &8 &5 &3\\
  1&5 &\ast &2 &2 &\ast\\
  3&8 &2 &3 &1 &4\\
  9&5 &2 &1 &\ast &1\\
  \ast &3 &\ast &4 &1 &\ast
\end{array}\right].
\end{equation}
Only one diagonal entry is given.
By Proposition \ref{onepartknow}, (\ref{Exa4}) is CP-completable.
We apply Algorithm \ref{Algorithm}.
It terminates at Step 3 with $k=5$,
and gives the CP-completion
\[
  \left[\begin{array}{cccccc}
  11.3758&7 &1 &3 &9 &6.1225\\
  7&32.5203 &5 &8 &5 &3\\
  1&5 &4.2013 &2 &2 &2.5314\\
  3&8 &2 &3 &1 &4\\
  9&5 &2 &1 &11.1114 &1\\
  6.1225 &3 &2.5314 &4 &1 &10.8581
\end{array}\right] =\sum_{i=1}^{9} \rho_i u_i u_i^T,
\]
where $u_i$ and $\rho_i$ are listed in Table \ref{ex4}.
\begin{table}[htbp]
\begin{center} \footnotesize
\renewcommand{\arraystretch}{1.2}
\begin{tabular}{|c|c|c|} \hline
 $i$ &$u_i$ & $\rho_i$ \\ \hline
$1$ &$(0.0000, 0.0458, 0.5132, 0.0224, 0.4187, 0.0000)^T$ & 1.9950 \\ \hline
$2$ &$(0.0000, 0.4881, 0.2702, 0.1191, 0.1225, 0.0000)^T$ & 5.6983 \\ \hline
$3$ &$(0.0434, 0.5268, 0.1914, 0.1203, 0.1183, 0.0000)^T$ & 41.6816 \\ \hline
$4$ &$(0.0000, 0.0000, 0.3857, 0.1581, 0.0000, 0.4562)^T$ & 11.2576 \\ \hline
$5$ &$(0.1508, 0.5697, 0.0000, 0.1599, 0.0000, 0.1197)^T$ & 44.0288 \\ \hline
$6$ &$(0.1929, 0.5399, 0.0000, 0.1050, 0.1622, 0.0000)^T$ & 17.9222 \\ \hline
$7$ &$(0.2977, 0.0000, 0.0324, 0.1558, 0.0000, 0.5141)^T$ & 29.2892 \\ \hline
$8$ &$(0.4287, 0.0842, 0.0000, 0.0000, 0.4871, 0.0000)^T$ & 11.0745 \\ \hline
$9$ &$(0.4121, 0.0000, 0.0306, 0.0000, 0.4875, 0.0697)^T$ & 29.4268 \\ \hline
\end{tabular}
\end{center}
\caption{The points $u_i$ and weights $\rho_i$ in Example \ref{Example4}.}
\label{ex4}
\end{table}
\eex

\bex\label{Example5222}\upshape
Consider the $E$-matrix $A$ given as (cf. \cite[Example 1.35]{BermanN}):
\begin{equation}\label{Exa5}
\left[\begin{array}{ccccc}
  1&1 &\ast &\ast &0\\
  1&1 &1 &\ast &\ast\\
  \ast&1 &1 &1 &\ast\\
  \ast&\ast &1 &1 &1\\
 0&\ast &\ast &1 &1
\end{array}\right].
\end{equation}
It is shown in \cite{BermanN} that (\ref{Exa5}) is not CP-completable.
We apply Algorithm \ref{Algorithm} to verify this fact.
It terminates at Step 1 with $k=3$ as (\ref{SDPR}) is
infeasible, which confirms that (\ref{Exa5}) is not
CP-completable.
\eex

\bex\label{Example6}\upshape
Consider the $E$-matrix $A$ given as:
\begin{equation}\label{Exa6}
\left[\begin{array}{ccccc}
  1&1 &2 &\ast &4\\
  1&1 &3 &\ast &3\\
  2&3 &3 &3 &\ast\\
  \ast&\ast &3 &2 &\ast\\
  4&3 &\ast &\ast &\ast
\end{array}\right].
\end{equation}
By Proposition \ref{Kfull},
the set $\mathbb{R}[x]_{\mathcal {E}}$ is not $\Delta$-full.
We apply Algorithm \ref{Algorithm} to solve this CP-completion problem.
It terminates at Step 1 with $k=1$, because (\ref{SDPR}) is infeasible.
This shows that the $E$-matrix $A$ is not CP-completable.
By this example, we can see that
Algorithm~\ref{Algorithm} might get a certificate
for non-CP-completability, even if
$\mathbb{R}[x]_{\mathcal {E}}$ is not $\Delta$-full.
\eex

\bex\label{Exampleco7}\upshape
Consider the $E$-matrix $A$ given as in Remark~\ref{remark3}:
\begin{equation}\label{Exa7}
\left[\begin{array}{ccc}
  1&1 &2 \\
  1&1 &3 \\
  2&3 &\ast
\end{array}\right].
\end{equation}
We have already seen that $A$ is not CP-completable.
We apply positive perturbations to $A$ as follows:
\begin{equation}\label{pms}
\left[\begin{array}{ccc}
  1+10^{-l}&1 &2 \\
  1&1+10^{-l} &3 \\
  2&3 &\ast
\end{array}\right],\quad l=1,2,\cdots.
\end{equation}
By the proof of Theorem \ref{convergence}, we know that (\ref{pms}) is CP-completable for all $l$.
For $l=1, 2, 3$, Algorithm \ref{Algorithm} produces
the following CP-completions:
\[
A_{1}=\left[\begin{array}{ccc}
  1.1&1 &2 \\
  1&1.1 &3 \\
  2&3 &10.9524
\end{array}\right] =\sum_{i=1}^{2} \lambda_i u_i u_i^T,
\]

\[
A_{2}=\left[\begin{array}{ccc}
  1.01&1 &2 \\
  1&1.01 &3 \\
  2&3 &56.2189
\end{array}\right] =\sum_{i=1}^{2} \rho_i v_i v_i^T,
\]
and
\[
A_{3}=\left[\begin{array}{ccc}
  1.001&1 &2 \\
  1&1.001 &3 \\
  2&3 &487.2967
\end{array}\right] =\sum_{i=1}^{4} \sigma_i \omega_i \omega_i^T.
\]
The points and their weights are shown in Tables~\ref{ex71} and \ref{ex72}.

\begin{table}[htbp]
\begin{center} \footnotesize
\renewcommand{\arraystretch}{1.2}
\begin{tabular}{|c|c|c|c|c|} \hline
$i$ &$u_i$ &$\lambda_i$ &$v_i$ &$\rho_i$ \\ \hline
$1$ &$(0.1254, 0.1881, 0.6866)^T$ & 23.2350 & $(0.0327, 0.0490, 0.9183)^T$ & 66.6636\\ \hline
$2$ &$(0.6190, 0.3810, 0.0000)^T$ & 1.9174 & $(0.5124, 0.4876, 0.0000)^T$ & 3.5753 \\ \hline
\end{tabular}
\end{center}
\caption{The points and weights for $A_1$ and $A_2$ in Example \ref{Exampleco7}.}
\label{ex71}
\end{table}

\begin{table}[htbp]
\begin{center} \footnotesize
\renewcommand{\arraystretch}{1.2}
\begin{tabular}{|c|c|c|} \hline
 $i$ &$\omega_i$ & $\sigma_i$ \\ \hline
$1$ &$(0.5012, 0.4987, 0.0001)^T$ & 3.7527 \\ \hline
$2$ &$(0.0682, 0.0696, 0.8623)^T$ & 11.6502 \\ \hline
$3$ &$(0.0027, 0.0048, 0.9925)^T$ & 485.9000 \\ \hline
$4$ &$(0.4545, 0.3039, 0.2487)^T$ & 0.0023 \\ \hline
\end{tabular}
\end{center}
\caption{The   points and   weights for $A_3$ in Example \ref{Exampleco7}.}
\label{ex72}
\end{table}

When $l \geq 4$, the resulting semidefinite relaxations (\ref{SDPR})
are ill-conditioned, and the semidefinite programming solver {\tt SeDuMi}
has trouble to solve them accurately.
This is because this $E$-matrix is not CP-completable,
but it has an arbitrarily tiny perturbation
that is CP-completable.
\eex

%
%
%\br
%\label{Remark5} \upshape For the case that the nonnegative $E$-matrix have diagonals unspecified,
%if the maximal principle submatrix is not CP-completable, then the $E$-matrix is not either.
%But even if the maximal principle submatrix is CP-completable,
%the $E$-matrix may not be CP-completable. Anyway, we know from
%Theorem \ref{convergence} and Remark \ref{noteonconvergence}
%that the $E$-matrix is in the closure of CP-completable $E$-matrices;
%there always exists a sequence of CP-completable $E$-matrices with the identifying vectors converging
%to that of the $E$-matrix. Moreover, Algorithm \ref{Algorithm} can give not only a CP-completion
%but also a CP-factorization of the approximate CP-matrix.
%\er
%

\section{Conclusions and Discussions}

This paper proposes a semidefinite algorithm
(i.e., Algorithm \ref{Algorithm}) for solving general CP-completion problems.
When all the diagonal entries are given,
Algorithm \ref{Algorithm} can give a certificate for non-CP-completability.
If a partial matrix is CP-completable,
Algorithm \ref{Algorithm} almost always gets
a CP-completion, as well as a CP-decomposition.
When some diagonal entries are missing,
if the maximum principal submatrix is not CP-completable,
then a certificate for non-CP-completability can be obtained; if it is CP-completable,
then Algorithm \ref{Algorithm} also almost always gives a CP-completion.

CP-completion has wide applications (cf.~\cite{BermanN}).
Here we show one in probability theory.
Let $x$ be a random vector in $\mathbb{R}^n$.
Suppose its expectation $\mathbb{E} x = b$
and partial entries of its covariance matrix are known, say,
for an index set $E$ the entries
\[
X_{ij} = \mathbb{E}[( x_i - b_i ) ( x_j - b_j )]
\]
with $(i,j) \in E$ are known.
We want to investigate for what values of $X_{ij} \,((i,j) \in E)$
the density function of $x$ is supported in the nonnegative orthant
$\mathbb{R}_+^n$. This question is basic and natural,
because many statistical quantities are positive in the world.
Interestingly, this question
can be formulated as a CP-completion problem.
From the expression of $X_{ij}$, we can see that
\[
\mathbb{E}( x_i x_j) =
X_{ij}  + b_i b_j.
\]
Let $A$ be the $E$-matrix such that
$A_{ij} = X_{ij}  + b_ib_j$
for all $(i,j) \in E$. Whether the random vector $x$
has a density function supported
in $\mathbb{R}_+^n$ or not is basically equivalent to
whether the following partial matrix
\[
\begin{bmatrix}
\ast &  b^T \\
b &  A
\end{bmatrix}
\]
is CP-completable or not.
In the above, $A$ is also a partial matrix;
only the entries $A_{ij}$ with $(i,j) \in E$ are known.

CP-completion also has applications in nonconvex quadratic optimization.
Let $E$ be an index set.  A typical quadratic optimization problem is
\begin{equation} \label{qp:E}
\left\{ \begin{array}{rl}
\min & \sum_{(i,j) \in E} a_{ij} x_i x_j \\
s.t. & p(x) = 1, \, x \geq 0,
\end{array} \right.
\end{equation}
where $p(x) = \sum_{(i,j) \in E} p_{ij} x_i x_j $ is a given polynomial.
To solve this nonconvex optimization problem globally, we need to
characterize the cone
\[
\mathcal{C}_E  = \{
y \in \mathbb{R}^{E} : \,
 y = C|_E \mbox{ for some CP-matrix } C \in \mathbb{R}^{n\times n}
\}.
\]
It can be shown that (\ref{qp:E}) is equivalent to
the linear convex problem
\begin{equation} \label{qp:CP}
\left\{ \begin{array}{rl}
\min & \sum_{(i,j) \in E} a_{ij} y_{ij} \\
s.t. &  \sum_{(i,j) \in E} p_{ij} y_{ij} = 1, \,  y \in \mathcal{C}_E .
\end{array} \right.
\end{equation}
To design efficient numerical methods for solving (\ref{qp:CP}),
we need to check the memberships in $\mathcal{C}_E$.
This is equivalent to solving CP-completion problems.

In this paper, we mainly focus on determining
whether a partial matrix is CP-completable or not.
However, we did not discuss the question
of how to get a CP-completion whose CP-rank is minimum.
This question is hard, and there exists few work
about it, to the best knowledge of the authors.
This is an interesting future work.

%
%\section*{Acknowledgments}
%The author thanks the anonymous authors whose work largely
%constitutes this sample file.

\end{document}